\begin{document}

\newcommand{\wk}{\mbox{$\,<$\hspace{-5pt}\footnotesize )$\,$}}

\numberwithin{equation}{section}
\newtheorem{teo}{Theorem}
\newtheorem{lemma}{Lemma}

\newtheorem{coro}{Corollary}
\newtheorem{prop}{Proposition}
\theoremstyle{definition}
\newtheorem{definition}{Definition}
\theoremstyle{remark}
\newtheorem{remark}{Remark}

\newtheorem{scho}{Scholium}
\newtheorem{open}{Question}
\newtheorem{example}{Example}
\numberwithin{example}{section}
\numberwithin{lemma}{section}
\numberwithin{prop}{section}
\numberwithin{teo}{section}
\numberwithin{definition}{section}
\numberwithin{coro}{section}
\numberwithin{figure}{section}
\numberwithin{remark}{section}
\numberwithin{scho}{section}

\bibliographystyle{abbrv}

\title{The Rosenthal-Szasz inequality for normed planes}
\date{}

\author{Vitor Balestro  \\ CEFET/RJ Campus Nova Friburgo \\ 28635000 Nova Friburgo \\ Brazil \\ vitorbalestro@gmail.com \and Horst Martini \\ Fakult\"{a}t f\"{u}r Mathematik \\ Technische Universit\"{a}t Chemnitz \\ 09107 Chemnitz\\ Germany \\ martini@mathematik.tu-chemnitz.de}

\maketitle

\begin{abstract} We aim to study the classical Rosenthal-Szasz inequality for a plane whose geometry is given by a norm. This inequality states that the bodies of constant width have the largest perimeter among all planar convex bodies of given diameter. In the case where the unit circle of the norm is given by a Radon curve, we obtain an inequality which is completely analogous to the Euclidean case. For arbitrary norms we obtain an upper bound for the perimeter calculated in the anti-norm, yielding an analogous characterization of all curves of constant width. To derive these results, we use methods from the differential geometry of curves in normed planes.

\end{abstract}

\noindent\textbf{Keywords}: anti-norm, Rosenthal-Szasz inequality, normed plane, constant width, Radon plane, support function.

\bigskip

\noindent\textbf{MSC 2010:} 52A10, 52A21, 52A40, 53A35.

\section{Introduction}
The classical \emph{Rosenthal-Szasz theorem} (see \cite{Ro-Sz}, \cite[Section 44]{BF}, \cite[p. 143]{Buch}, and \cite[p. 386]{Mos}) says that for a compact, convex figure $K$ in the Euclidean plane with perimeter $p(K)$ and diameter $D(K)$ the inequality $p(K) \leq \pi D(K)$ holds, with equality if and only if $K$ is a planar set of constant width $D(K)$. That \emph{any} figure of the same constant width satisfies the equality case is clear by Barbier's theorem (cf. \cite{Barb} and \cite[Section 44]{BF}), saying that all convex figures of fixed constant width have the same perimeter. We will extend the Rosenthal-Szasz theorem to all normed planes whose unit circle is a Radon curve. A basic reference regarding the geometry of normed planes and spaces is the monograph \cite{Tho}, whereas the important subcase of Radon planes is comprehensively discussed in \cite{Ma-Swa}. For switching to normed planes, we modify the notation above slightly.

Our approach to Radon planes follows \cite{Ma-Swa}, and we start by introducing an orthogonality concept. We say that a vector $v \in X$ is (\emph{left}) \emph{Birkhoff orthogonal} to $w$ (denoted by $v \dashv_B w$) if $||v|| \leq ||v+\lambda w||$ for any $\lambda \in \mathbb{R}$. If $v$ and $w$ are non-zero vectors, this is equivalent to stating that the unit ball is supported at $v/||v||$ by a line whose direction is $w$. Consequently, if $v \dashv_Bw$, then the distance from any fixed point of the plane to a line in the direction $w$ is attained by a segment whose direction is $v$. This property of Birkhoff orthogonality will be important later. For the notion of Birkhoff orthogonality, we refer the reader to \cite{alonso} and \cite[$\S$ 3.2]{Tho}. 
Fixing a symplectic bilinear form $\omega$ on $X$ yields an identification between $X$ (which is unique up to constant multiplication) and its dual $X^*$:
\begin{align*} X \ni x \mapsto \iota_x\omega = \omega(x,\cdot) \in X^*,
\end{align*}
and this allows us to identify the usual dual norm in $X^*$ with a norm in $X$ given as
\begin{align*} ||y||_a := \sup\{\omega(x,y): x \in B\}, \ \ y \in X,
\end{align*}
where $B = \{x \in X: ||x|| \leq 1\}$ is the \emph{unit ball} of $(X,||\cdot||)$. This norm is called the \emph{anti-norm}. A \emph{Radon plane} is a normed plane in which Birkhoff orthogonality is a symmetric relation. Since the anti-norm reverses Birkhoff orthogonality, this is equivalent to the statement that the anti-norm is a multiple of the norm. Consequently, in a Radon plane one may assume, up to rescaling the symplectic form $\omega$, that $||\cdot||_a = ||\cdot||$, and \emph{we will always assume that this is assured}. 

Let $(X,||\cdot||)$ be a normed plane where (at least at the beginning of our argumentation) the unit circle $S:=\{x \in X: ||x|| = 1\}$ is a closed, simple and convex curve of class $C^2$. With a fixed norm, one can define the length of a curve $\gamma:[a,b]\rightarrow X$ in the usual way by
\begin{align*}  l(\gamma) := \sup\left\{\sum_{j=1}^n||\gamma(t_{j})-\gamma(t_{j-1})||: a = t_0 < t_1 <\ldots < t_n = b \ \mathrm{is \ a \ partition \ of} \ J\right\},
\end{align*}
and whenever $\gamma$ is smooth, one clearly has
\begin{align*} l(\gamma) = \int_{a}^b||\gamma'(t)||\ dt.
\end{align*}

Also, the choice of a symplectic bilinear form yields an area element and an orientation. An important feature of Radon planes is that the \emph{Kepler law} holds for them: the arc-length of the unit circle is proportional to the area of the corresponding sector of the unit ball. In the normalization that we are adopting, the arc-length is actually twice the area of the sector. Equivalently, if $\varphi:\mathbb{R} \ \mathrm{mod} \ l(S) \rightarrow X$ is a positively oriented parametrization of the unit circle by arc-length, then $\omega(\varphi,\varphi') = ||\varphi'|| = 1$. 

If $\gamma$ is a closed, simple and convex curve, then its range $\{\gamma\}$ bounds a planar convex body $K_{\gamma}$. It is well known that for each direction $v$ of $X$ this convex body is supported by two parallel lines orthogonal (in the Euclidean sense) to $v$. The distance (in the norm) between these two supporting lines is the (\emph{Minkowski}) \emph{width} of $K_{\gamma}$ in the direction of $v$. If this number is independent of the direction $v$, then we say that the convex body $K_{\gamma}$ (or the curve $\gamma$) has \emph{constant width} (see \cite[$\S$ 4.2]{Tho}).  Also, we define the \emph{diameter} of a given subset $A \subseteq X$ to be
\begin{align*} \mathrm{diam}(A) = \sup\{||x-y||:x,y \in A\}.
\end{align*}
Our objective in this note is to prove the following theorem.

\begin{teo}\label{main} Let $\gamma:S^1\rightarrow X$ be a closed, simple, convex curve in a Radon plane $(X,||\cdot||)$. Then
\begin{align*} l(\gamma) \leq \mathrm{diam}\{\gamma\}\cdot\frac{l(S)}{2},
\end{align*}
and equality holds if and only if $\gamma$ is a curve of constant width $\mathrm{diam}\{\gamma\}$. 
\end{teo}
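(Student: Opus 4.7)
The plan is to establish a Cauchy-type integral formula expressing $l(\gamma)$ as the mean of the Minkowski widths of $K_\gamma$, and then to use that every width is bounded by the diameter. I would first reduce to the case where $\gamma$ is of class $C^2$ and strictly convex; the general statement follows by approximation in the Hausdorff metric, under which perimeter, width in each fixed direction, and diameter are all continuous.

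For smooth $\gamma$, let $\varphi$ be the arc-length parametrization of $S$ fixed in the introduction, so that $||\varphi'||=1$ and $\omega(\varphi,\varphi')=1$. The first step is an intrinsic fact about Radon planes with this normalization: $\varphi'$ is itself an arc-length parametrization of $S$, shifted by a quarter-period, equivalently $\varphi''(s)=-\varphi(s)$. This identity reflects the self-duality of $B$ under the symplectic isomorphism $x\mapsto\omega(x,\cdot)$, which in the Radon case is an isometry between $(X,||\cdot||)$ and its dual, and I expect it to be the main technical obstacle. Taking the identity for granted, I would parametrize $\gamma$ by $t\in[0,l(S))$ so that the tangent line to $\gamma$ at $\gamma(t)$ is parallel to $\varphi(t)$, and introduce the support function
\[
h(t):=\sup\{\omega(\varphi(t),x):x\in K_\gamma\}=\omega(\varphi(t),\gamma(t)).
\]
Writing $\gamma(t)=a(t)\varphi(t)+b(t)\varphi'(t)$ in the moving frame, the relation $\omega(\varphi,\varphi')=1$ forces $b(t)=h(t)$, and the tangency condition $\gamma'(t)\parallel\varphi(t)$ together with $\varphi''=-\varphi$ forces $a(t)=-h'(t)$. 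Differentiation then gives $\gamma'(t)=-(h(t)+h''(t))\varphi(t)$, so by convexity and periodicity,
\[
l(\gamma)=\int_0^{l(S)}|h(t)+h''(t)|\,dt=\int_0^{l(S)}h(t)\,dt.
\]
Central symmetry $\varphi(t+l(S)/2)=-\varphi(t)$ then yields the Cauchy-type formula
\[
l(\gamma)=\int_0^{l(S)/2}\bigl(h(t)+h(t+l(S)/2)\bigr)\,dt=\int_0^{l(S)/2}w(t)\,dt,
\]
where $w(t)$ is the Minkowski width of $K_\gamma$ in the direction Birkhoff-orthogonal to $\varphi(t)$.

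It remains to bound $w(t)\le\mathrm{diam}\{\gamma\}$ pointwise. Letting $L_\pm$ denote the two supporting lines of $K_\gamma$ parallel to $\varphi(t)$, pick boundary contact points $p_\pm\in K_\gamma\cap L_\pm$; by the defining property of Birkhoff orthogonality recalled in the introduction, the distance from $p_+$ to $L_-$, which equals $w(t)$, is attained along the Birkhoff-normal direction and is therefore no larger than $||p_+-p_-||\le\mathrm{diam}\{\gamma\}$. Integrating delivers the inequality $l(\gamma)\le\mathrm{diam}\{\gamma\}\cdot l(S)/2$. Equality forces $w(t)\equiv\mathrm{diam}\{\gamma\}$, whence $\gamma$ has constant width $\mathrm{diam}\{\gamma\}$; the converse is immediate from the Cauchy-type formula, which also recovers Barbier's theorem in this normed setting.
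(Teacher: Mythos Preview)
Your overall strategy --- reduce to a Cauchy-type formula $l(\gamma)=\int h$ and then bound each width by the diameter --- is exactly the paper's, and the approximation step for the non-smooth case is handled the same way. The genuine gap is the identity $\varphi''=-\varphi$ that you single out as ``the main technical obstacle'': it is not merely hard to justify, it is \emph{false} in every non-Euclidean Radon plane. Indeed, $\varphi''=-\varphi$ is a linear ODE whose solutions have the form $\varphi(s)=\cos(s)\,A+\sin(s)\,B$, so $S$ would be an ellipse and the norm Euclidean. In an arbitrary Radon plane one only knows $\varphi''=c(s)\varphi$ for some negative function $c$ (from differentiating $\omega(\varphi,\varphi')=1$), and $c$ need not be constant. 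Your moving-frame computation then gives $\gamma'(t)=\bigl(c(t)h(t)-h''(t)\bigr)\varphi(t)$, and after integrating the $h''$ term away you obtain $l(\gamma)=\int_0^{l(S)}|c(t)|\,h(t)\,dt$ rather than $\int_0^{l(S)}h(t)\,dt$; the pairing of opposite parameters no longer produces the Minkowski width.

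The paper sidesteps this with a seemingly innocuous change: it parametrizes $\gamma$ so that $\gamma'(u)\parallel\varphi'(u)$ rather than $\gamma'(t)\parallel\varphi(t)$. Then $\|\gamma'(u)\|=f(u)\|\varphi'(u)\|=f(u)=\omega(\varphi,\gamma')$, and a single integration by parts using only the Kepler relation $\omega(\varphi,\varphi')=1$ --- with no appeal to $\varphi''$ whatsoever --- yields
\[
l(\gamma)=\int_0^{l(S)}\omega(\varphi,\gamma')\,du=\int_0^{l(S)}\omega(\gamma,\varphi')\,du=\int_0^{l(S)}h_\gamma(u)\,du.
\]
From here your pairing argument and the width--diameter comparison go through verbatim. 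So the fix is simply to swap the roles of $\varphi$ and $\varphi'$ in the parametrization of $\gamma$; the rest of your outline is correct.
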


This is clearly an extension of the classical Rosenthal-Szasz inequality holding for the Euclidean plane (see \cite{Ro-Sz}). It is worth mentioning that this inequality was already studied to $2$-dimensional spaces of constant curvature in \cite{cifre}.

The main ``tool" that we use here is the differential geometry of smooth curves in normed planes, for which our main reference is \cite{Ba-Ma-Sho} (see also \cite{Ma-Wu}). Based on this, we will prove the result for smooth curves in smooth normed planes and, after that, we can extend the result to the non-smooth case by routine approximation arguments. For a given $C^2$ curve $\gamma(s):[0,l(\gamma)]\rightarrow X$ parametrized by arc-length $s$, choose a smooth function $t:[0,l(\gamma)]\rightarrow \mathbb{R}$ such that
\begin{align*} \gamma'(s) = \frac{d\varphi}{dt}(t(s)),
\end{align*} 
where we recall that $\varphi(t)$ is a positively oriented parametrization of the unit circle by arc-length $t$. Geo\-me\-tri\-cally, we are identifying where the (oriented) line in the direction of $\gamma'(s)$ supports the unit ball $B := \{x \in X:||x|| \leq 1\}$. The \emph{circular curvature} of $\gamma$ at $\gamma(s)$ is the number
\begin{align*} k_{\gamma}(s) := t'(s).
\end{align*}
In any point of $\gamma$, where $k_{\gamma}(s) \neq 0$, the number $\rho(s):=k_{\gamma}(s)^{-1}$ is the \emph{radius of curvature} of $\gamma$ at $\gamma(s)$. This is the radius of an osculating circle of $\gamma$ at $\gamma(s)$. For curves of constant width the following was first settled for \emph{any} smooth Minkowski plane (not necessarily Radon) in \cite{Pet} (see also \cite{Ba-Ma-Sho} for an elegant proof).

\begin{prop} Let $\gamma:S^1\rightarrow X$ be a simple, closed, strictly convex curve of class $C^2$ having constant width $d$. Then we have: \\

\noindent\textbf{\emph{(a)}} The sum of the curvature radii at any pair of points of $\gamma$ belonging to two parallel supporting lines of $K_\gamma$ equals $d$.\\

\noindent\textbf{\emph{(b)}} The length of $\gamma$ satisfies the equality $l(\gamma) = d\cdot\frac{l(S)}{2}$. 
\end{prop}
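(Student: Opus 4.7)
The plan is to reparametrize $\gamma$ by the tangent-direction parameter of the unit circle, encode constant width as a single vectorial identity linking antipodal points on $\gamma$, and differentiate once to extract (a); part (b) will then drop out by integrating (a) over a half-period.

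First I would pass from the arc-length $s$ to the parameter $t=t(s)\in\mathbb{R}/l(S)\mathbb{Z}$, which is legitimate because strict convexity of $\gamma$ gives $k_\gamma=t'(s)>0$. In this parameter, $d\gamma/dt=R(t)\,\varphi'(t)$ with $R(t):=\rho(s(t))$. Setting $\gamma^*(t):=\gamma(t+l(S)/2)$, the central symmetry of $S$ gives $\varphi(t+l(S)/2)=-\varphi(t)$ and $\varphi'(t+l(S)/2)=-\varphi'(t)$, so that $\gamma(t)$ and $\gamma^*(t)$ are the tangent points on the two parallel supporting lines of $K_\gamma$ with common direction $\varphi'(t)$. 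Since $\varphi(t)\dashv_B\varphi'(t)$ and $\|\varphi(t)\|=1$, a short Birkhoff-orthogonality argument identifies the norm-distance between these supporting lines with the absolute value of the $\varphi(t)$-component of $\gamma^*(t)-\gamma(t)$ in the basis $\{\varphi(t),\varphi'(t)\}$; the Radon normalization $\omega(\varphi,\varphi')=1$ then evaluates this component as $\omega(\gamma^*(t)-\gamma(t),\varphi'(t))$. Constant width $d$ is therefore equivalent to the vectorial identity
\[
\gamma^*(t)-\gamma(t) \;=\; -d\,\varphi(t) + b(t)\,\varphi'(t)
\]
for some smooth function $b$.

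Differentiating both sides in $t$, the left-hand side becomes $-(R(t)+R^*(t))\,\varphi'(t)$ with $R^*(t):=R(t+l(S)/2)$, while the right-hand side, after invoking the Radon identity $\varphi''(t)=f(t)\varphi(t)$ (a direct consequence of differentiating $\omega(\varphi,\varphi')=1$, which gives $\omega(\varphi,\varphi'')=0$), equals $(b'(t)-d)\,\varphi'(t)+b(t)f(t)\,\varphi(t)$. Comparing the two components yields $b(t)f(t)=0$ and $R(t)+R^*(t)=d-b'(t)$. Since the Radon condition makes the norm and its dual coincide, smoothness of $S$ forces strict convexity of $S$, so $\varphi''$ is nowhere vanishing and $f\neq 0$ everywhere; hence $b\equiv 0$ and we obtain $R(t)+R^*(t)=d$, which is (a). Integrating,
\[
l(\gamma)=\int_0^{l(S)} R(t)\,dt = \int_0^{l(S)/2}\bigl(R(t)+R^*(t)\bigr)\,dt = d\cdot\tfrac{l(S)}{2},
\]
establishing (b).

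The main obstacle I anticipate is the identification of the Minkowski width with a symplectic component in the first step: it rests on a careful use of Birkhoff orthogonality together with the Radon normalization $\omega(\varphi,\varphi')=1$, and it degrades outside the Radon setting, where $\omega(\varphi,\varphi')$ is not constant and $\varphi''\parallel\varphi$ fails. This is presumably why Petty's general result needs the heavier support-function machinery of \cite{Ba-Ma-Sho}.
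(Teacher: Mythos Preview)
The paper does not actually prove this proposition: it is quoted from Petty and from the curvature survey \cite{Ba-Ma-Sho}, and it is explicitly announced as a result valid in \emph{any} smooth Minkowski plane, not only Radon planes. Your argument, by contrast, is built entirely on the Radon normalization $\omega(\varphi,\varphi')\equiv 1$ (you flag this yourself at the end), so at best it establishes the proposition in the Radon subcase; relative to the full statement this is a genuine scope gap.

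Within the Radon case your outline is essentially sound, but one step is weaker than you present it. The inference ``strict convexity of $S$ $\Rightarrow$ $\varphi''$ nowhere vanishing'' is not valid for $C^2$ curves: a strictly convex $C^2$ arc can have isolated points of zero curvature, so you cannot conclude $f\neq 0$ everywhere from strict convexity alone. The cleaner route to $b\equiv 0$---and the one the paper itself invokes in Section~3 when it asserts that $\gamma(u)-\gamma(u+\pi)$ points in the direction $\varphi(u)$---is metric rather than differential: from $\gamma-\gamma^*=d\,\varphi - b\,\varphi'$ and $\varphi\dashv_B\varphi'$ one gets $\|\gamma-\gamma^*\|\geq d$, while the diameter of a body of constant width $d$ is exactly $d$, forcing equality; strict convexity of $S$ then rules out any nonzero $\varphi'$-component, so $b=0$. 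With this repair your derivation of (a) and the integration to (b) go through. Note, incidentally, that (b) alone does not even need $b\equiv 0$: since $b$ is $l(S)/2$-periodic, $\int_0^{l(S)/2} b'(t)\,dt=0$, and your identity $R+R^*=d-b'$ already integrates to $l(\gamma)=d\cdot l(S)/2$.
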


Part \textbf{(b)} is the extension of Barbier's theorem to Minkowski planes, see (in addition to \cite{Pet} and \cite{Ba-Ma-Sho}) also \cite{chai}, \cite{Mi-Mo}, and \cite{Ma-Mu}.

\section{Proof of Theorem \ref{main}}

Let $\gamma:S^1\rightarrow X$ be a simple, closed, strictly convex curve of class $C^2$. For the sake of simplicity, we assume also that the region bounded by $\{\gamma\}$ contains the origin. We define the \emph{Minkowski support function} of $\gamma$ to be the function which associates each point $\gamma(s)$ to the distance $h_{\gamma}(s)$ from the support line of $K_{\gamma}$ at $\gamma(s)$ to the origin. With that definition, for any $s_0,s_1 \in S^1$ such that $K_{\gamma}$ is supported at $\gamma(s_0)$ and $\gamma(s_1)$ by parallel lines, we get the inequality
\begin{align}\label{suppdiam} h_{\gamma}(s_0) + h_{\gamma}(s_1) \leq ||\gamma(s_0)-\gamma(s_1)|| \leq \mathrm{diam}\{\gamma\}.
\end{align}
This comes from the fact (described above) that the distance between the support lines must be attained by a segment which is in the left Birkhoff orthogonal direction to them (see Figure \ref{minksupp}). 

\begin{figure}[h]
\centering
\includegraphics{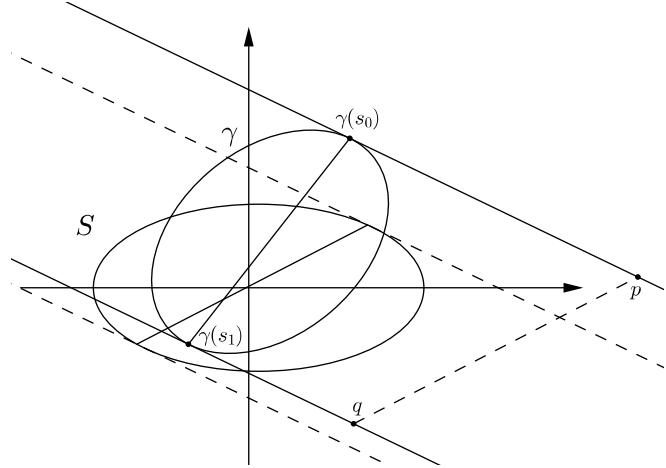}
\caption{$||p-q|| = h_{\gamma}(s_0)+h_{\gamma}(s_1) \leq ||\gamma(s_0) - \gamma(s_1)|| \leq \mathrm{diam}\{\gamma\}$.}
\label{minksupp}
\end{figure}

Recall that $\varphi:\mathbb{R} \ \mathrm{mod} \ l(S) \rightarrow \mathbb{R}$ is a positively oriented parametrization of the unit circle by arc-length, and remember also that we have $\omega(\varphi,\varphi') = 1$. Assume now that $\gamma$ is endowed with a parameter $u$ for which
\begin{align*} \gamma'(u) = f(u)\cdot\varphi'(u), \ \ u \in [0,l(S)],
\end{align*}
with $f > 0$ (i.e., the parametrization is positively oriented). We get immediately that $f = \omega(\varphi,\gamma')$. For each $u$, we decompose $\gamma$ in the basis $\{\varphi(u),\varphi'(u)\}$ to obtain
\begin{align*} \gamma = \omega(\gamma,\varphi')\varphi - \omega(\gamma,\varphi)\varphi'.
\end{align*}
Since the support line to $K_{\gamma}$ at $\gamma(u)$ has the direction $\varphi'(u)$, we have that the distance of this line to the origin is simply the projection of $\gamma(u)$ in the direction $\varphi(u)$. It follows from the equality above that the support function of $\gamma$ is given by
\begin{align*} h_{\gamma}(u) = \omega(\gamma(u),\varphi'(u)).  
\end{align*}
Now we calculate the length of $\gamma$:
\begin{align*} l(\gamma) = \int_0^{l(S)}||\gamma'(u)|| \ du = \int_{0}^{l(S)}\omega(\varphi(u),\gamma'(u))\cdot||\varphi'(u)|| \ du = \\ = \int_0^{l(S)}\omega(\varphi(u),\gamma'(u)) \ du = \int_0^{l(S)}\omega(\varphi(u),\gamma(u))' - \omega(\varphi'(u),\gamma(u)) \ du = \\ = \int_0^{l(S)}\omega(\gamma(u),\varphi'(u)) \ du = \int_0^{l(S)}h_{\gamma}(u) \ du.
\end{align*}
Now notice that the support lines to $K_{\gamma}$ at $\gamma(u)$ and $\gamma\left(u+l(S)/2\right)$ are parallel. Hence, from (\ref{suppdiam}) we get
\begin{align}\label{supp} h_{\gamma}(u) + h_{\gamma}\left(u+l(S)/2\right) \leq \mathrm{diam}\{\gamma\}.
\end{align}
Finally,
\begin{align*} l(\gamma) = \int_0^{l(S)}h_{\gamma}(u) \ du = \int_0^{l(S)/2}h_{\gamma}(u) + h_{\gamma}\left(u+l(S)/2\right) \ du \leq \mathrm{diam}\{\gamma\}\cdot\frac{l(S)}{2},
\end{align*}
and equality holds if and only if equality holds in (\ref{supp}) for each $u$. This clearly characterizes bodies of constant width.

This proves the theorem for the case that both $S$ and $\gamma$ are of class $C^2$. The general case follows from standard approximation of bodies which are not necessarily smooth or strictly convex by bodies whose boundaries are of class $C^2$ in the Hausdorff metric (it is not hard to check that a non-smooth Radon curve can be approximated by smooth Radon curves). Since all quantities involved in the inequality are clearly continuous in that metric, we have the result. 

\section{The general case}

For the sake of completeness, in this section we briefly explain why our approach does not work for general normed planes. We also provide a weaker bound, which is enough to prove Barbier's theorem. This discussion yields a nice characterization of constant width curves in arbitrary normed planes, where the anti-norm is involved. With the same notation as in the previous section we have that, if the plane is not Radon, $\omega(\varphi,\varphi')$ is not a constant function.  Hence, in the parametrization $\gamma(u)$ such that $\gamma'(u) = f(u)\cdot\varphi'(u)$, we get 
\begin{align*} f = \frac{\omega(\varphi,\gamma')}{\omega(\varphi,\varphi')},
\end{align*} 
where the parameter was omitted to simplify the notation. Also, the decomposition of $\gamma(u)$ in the basis $\{\varphi(u),\varphi'(u)\}$ now reads
\begin{align*} \gamma = \frac{\omega(\gamma,\varphi')}{\omega(\varphi,\varphi')}\varphi - \frac{\omega(\gamma,\varphi)}{\omega(\varphi,\varphi')}\varphi',
\end{align*}
from where the support function of $\gamma$ is given by
\begin{align*} h_{\gamma} = \frac{\omega(\gamma,\varphi')}{\omega(\varphi,\varphi')}.
\end{align*}
Therefore, calculating the length of $\gamma$ we obtain the following bound:
\begin{align*} l(\gamma) = \int_{0}^{l(S)}\frac{\omega(\varphi,\gamma')}{\omega(\varphi,\varphi')} \ du = \int_0^{l(S)} \frac{\omega(\varphi,\gamma)'}{\omega(\varphi,\varphi')} \ du + \int_0^{l(S)} \frac{\omega(\gamma,\varphi')}{\omega(\varphi,\varphi')} \ du = \\ = \int_0^{l(S)} \frac{\omega(\varphi,\gamma)'}{\omega(\varphi,\varphi')} \ du + \int_0^{l(S)}h_{\gamma} \ du \leq \mathrm{diam}\{\gamma\}\cdot\frac{l(S)}{2} + \int_0^{l(S)} \frac{\omega(\varphi,\gamma)'}{\omega(\varphi,\varphi')} \ du,
\end{align*}
and the last integral does not necessarily vanish. As mentioned before, this weaker inequality can be used to prove Barbier's theorem. Indeed, denoting for simplicity $\pi := l(S)/2$, if $\gamma$ is a curve of constant width, then $\gamma(u) - \gamma(u+\pi)$ points in the direction $\varphi(u)$, and we get
\begin{align*} \int_0^{l(S)}\frac{\omega(\varphi,\gamma)'}{\omega(\varphi,\varphi')} \ du = \int_0^{\pi}\frac{\omega(\varphi(u),\gamma(u)-\gamma(u+\pi))'}{\omega(\varphi(u),\varphi'(u))} \ du = 0,
\end{align*}
since $\omega(\varphi(u),\gamma(u)-\gamma(u+\pi))$ vanishes for every $u$. It remains an open problem whether the Rosenthal-Szasz inequality, as stated in Theorem \ref{main}, holds for all normed planes. However, for arbitrary normed planes we can derive the following bound for the perimeter in the anti-norm.

\begin{teo} Let $\gamma:S^1\rightarrow (X,||\cdot||)$ be a simple, closed and convex curve in an arbitrary normed plane. Then, denoting the perimeter in the anti-norm by $l_a$, we have
\begin{align*} l_a(\gamma) \leq \mathrm{diam}\{\gamma\}\cdot\frac{l_a(S)}{2},
\end{align*}
where we recall that $\mathrm{diam}\{\gamma\}$ is the diameter of $\gamma$ in the norm. Equality holds if and only if $\gamma$ is a curve of constant width (in the norm). 
\end{teo}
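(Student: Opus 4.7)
The plan is to mimic the computation carried out for Theorem \ref{main}, but with the length functional replaced by the anti-norm length $l_a$. The crucial observation is that the obstruction to the Radon-plane argument in the general case, namely that $\omega(\varphi,\varphi')$ is no longer constant, is exactly absorbed into the formula for $||\gamma'||_a$.

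Parametrize $\gamma$ by $u\in[0,l(S)]$ with $\gamma'(u)=f(u)\varphi'(u)$, $f(u)>0$. The anti-norm of $\varphi'(u)$ is easy to compute: since the unit ball $B$ is supported at $\varphi(u)$ by a line in the direction $\varphi'(u)$, the definition $||\varphi'(u)||_a = \sup\{\omega(x,\varphi'(u)):x\in B\}$ evaluates to $\omega(\varphi(u),\varphi'(u))$. Combined with $f=\omega(\varphi,\gamma')/\omega(\varphi,\varphi')$, this gives $||\gamma'(u)||_a = \omega(\varphi(u),\gamma'(u))$. Integrating by parts via $\omega(\varphi,\gamma') = (\omega(\varphi,\gamma))' + \omega(\gamma,\varphi')$ and using periodicity to discard the exact term, I would arrive at
\begin{equation*}
l_a(\gamma) = \int_0^{l(S)} \omega(\gamma(u),\varphi'(u)) \, du = \int_0^{l(S)} h_\gamma(u)\, \omega(\varphi(u),\varphi'(u)) \, du,
\end{equation*}
where the second equality uses the support-function formula $h_\gamma = \omega(\gamma,\varphi')/\omega(\varphi,\varphi')$ derived at the start of Section 3.

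Because $\varphi(u+l(S)/2)=-\varphi(u)$, the weight $\omega(\varphi,\varphi')$ is invariant under $u\mapsto u+l(S)/2$, and folding the integral in half gives
\begin{equation*}
l_a(\gamma) = \int_0^{l(S)/2} \bigl[h_\gamma(u)+h_\gamma(u+l(S)/2)\bigr]\, \omega(\varphi(u),\varphi'(u)) \, du.
\end{equation*}
The inequality (\ref{supp}), which rests only on Birkhoff orthogonality and therefore holds in any normed plane, bounds the bracket above by $\mathrm{diam}\{\gamma\}$, while the same symmetry yields $\int_0^{l(S)/2}\omega(\varphi,\varphi')\,du = l_a(S)/2$. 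Combining these gives $l_a(\gamma)\leq\mathrm{diam}\{\gamma\}\cdot l_a(S)/2$, and since $\omega(\varphi,\varphi')>0$ everywhere, equality forces $h_\gamma(u)+h_\gamma(u+l(S)/2)=\mathrm{diam}\{\gamma\}$ for every $u$, which characterizes curves of constant width $\mathrm{diam}\{\gamma\}$. The extension to general convex $\gamma$ and non-$C^2$ unit circle $S$ is again by Hausdorff approximation. The main conceptual step is the identity $||\varphi'||_a = \omega(\varphi,\varphi')$, which converts the "bad" weight in the formula for $l(\gamma)$ in Section 3 into precisely the anti-norm arc-length element; I do not expect any serious obstacle beyond this.
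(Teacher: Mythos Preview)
Your argument is correct and follows the same route as the paper's proof: the key identity $||\varphi'||_a=\omega(\varphi,\varphi')$ (which the paper cites from \cite{Ma-Swa} while you derive it directly from the supporting-line interpretation), the integration by parts $\int\omega(\varphi,\gamma')=\int\omega(\gamma,\varphi')$, the rewriting in terms of $h_\gamma$, and the halving of the integration interval using the $l(S)/2$-periodicity of the weight all match. Your explicit observation that $\omega(\varphi,\varphi')$ is $l(S)/2$-periodic is the only detail the paper leaves implicit (there it appears as $||\varphi'(u+l(S)/2)||_a=||\varphi'(u)||_a$).
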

\begin{proof} We have the equality $\omega(\varphi,\varphi') = ||\varphi||\cdot||\varphi'||_a = ||\varphi'||_a$ (cf. \cite{Ma-Swa}). In the same notation as in the proof of Theorem \ref{main}, we calculate
\begin{align*} l_a(\gamma) = \int_0^{l(S)}||\gamma'(u)||_a \ du = \int_0^{l(S)}f(u)\cdot||\varphi'||_a \ du = \int_0^{l(S)}\omega(\varphi,\gamma') \ du = \\ = \int_0^{l(S)} \omega(\gamma,\varphi') \ du = \int_0^{l(S)}||\varphi'||_a\cdot\frac{\omega(\gamma,\varphi')}{\omega(\varphi,\varphi')} \ du = \int_0^{l(S)} ||\varphi'||_a\cdot h_{\gamma}(u) \ du = \\ = \int_0^{l(S)/2}||\varphi'(u)||_a\left(h_{\gamma}(u)+h_{\gamma}(u+l(S)/2)\right) \ du \leq \mathrm{diam}\{\gamma\}\int_0^{l(S)/2}||\varphi'(u)||_a \ du = \\ = \mathrm{diam}\{\gamma\}\cdot\frac{l_a(S)}{2}.
\end{align*}
Of course, equality holds if and only if $h_{\gamma}(u) + h_{\gamma}(u+l(S)/2) = \mathrm{diam}\{\gamma\}$ for any $u$. As we already mentioned, this characterizes curves of constant width. 

\end{proof}

It is clear that Theorem \ref{main} is a consequence of the previous result, since in a Radon plane (with our normalization) the anti-norm equals the norm. Also, since the anti-norm of the anti-norm is the original norm (see \cite{Ma-Swa}), we can bound the length of a convex curve in an arbitrary norm in terms of the respective anti-norm as follows.

\begin{coro} If $\mathrm{diam}_a$ denotes the diameter calculated in the anti-norm, then we have the inequality
\begin{align*} l(\gamma) \leq \mathrm{diam}_a\{\gamma\}\cdot\frac{l(S)}{2},
\end{align*}
and equality holds if and only if the curve $\gamma$ has constant width in the anti-norm. 
\end{coro}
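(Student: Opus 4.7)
\noindent The plan is to obtain the corollary as an immediate consequence of the preceding theorem, by applying that theorem to the ``dual'' normed plane $(X,||\cdot||_a)$ in which the roles of the norm and the anti-norm are interchanged. The essential ingredient is the involutivity of the anti-norm operation, $(||\cdot||_a)_a = ||\cdot||$, cited from \cite{Ma-Swa}: since the anti-norm is defined via polar duality with respect to the fixed symplectic form $\omega$, iterating this duality returns the original norm.

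Concretely, I would first check that, with the same $\omega$, the anti-norm of $||\cdot||_a$ is $||\cdot||$, so that the previous theorem applies verbatim to $(X,||\cdot||_a)$. Under this application, the conclusion translates term by term. The ``anti-norm perimeter'' of $\gamma$ becomes $l(\gamma)$, the original perimeter; the ``diameter in the norm'' becomes $\mathrm{diam}_a\{\gamma\}$; and the anti-norm perimeter of the unit circle of the dual plane becomes the perimeter, measured in $||\cdot||$, of the unit circle of $||\cdot||_a$, which is precisely $l(S)/2$ in the notation of the corollary (with $S$ understood as the unit circle relevant to the dual setting). The equality case transfers in the same way: equality in the dual theorem occurs if and only if $\gamma$ has constant width in the ``norm'' of the dual plane, which is $||\cdot||_a$, matching the stated equality condition.

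The main difficulty is purely one of bookkeeping: one must carefully track which unit circle, which symplectic form, and which orientation are intended when passing to the dual plane, so that the right-hand side is correctly identified. No new geometric or analytic content is required, since the previous theorem has already been established in arbitrary normed planes. Thus the corollary follows with essentially no additional work beyond invoking the involution property of the anti-norm.
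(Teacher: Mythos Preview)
Your approach is exactly the paper's: the authors derive the corollary in one line from the preceding theorem by invoking the involutivity $(||\cdot||_a)_a=||\cdot||$ from \cite{Ma-Swa} and swapping the roles of norm and anti-norm. Your explicit flag that the symbol $S$ on the right-hand side must be read as the unit circle of the dual plane (so that the role-swapped $l_a(S)$ becomes $l(S_a)$) is apt---this bookkeeping is left implicit in the paper.
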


\end{document}